\newtheorem{theorem}{Theorem}[section]
\newtheorem{definition}[theorem]{Definition}
\newtheorem{prop}[theorem]{Proposition}
\newtheorem{corollary}[theorem]{Corollary}
\newtheorem{question}[theorem]{Question}
\newtheorem{remark}[theorem]{Remark}
\numberwithin{equation}{section}
\keywords{Absolutely norm attaining operators, Absolutely minimum attaining operators, Toeplitz operators, Hankel operators}
\subjclass[2010]{47A10; 47B07; 47B35}
\begin{document}
\title[$\mathcal{AM}$-Toeplitz and $\mathcal{AN}$-Hankel operators]{Absolutely minimum attaining Toeplitz and absolutely norm attaining Hankel operators}
\author{G. Ramesh }
\address{G. Ramesh, Department of Mathematics, IIT Hyderabad, Kandi, Sangareddy, Telangana- 502284, India.}
 \email{rameshg@math.iith.ac.in}
\author{Shanola S. Sequeira}
 \address{Shanola S. Sequeira, Department of Mathematics, IIT Hyderabad, Kandi, Sangareddy, Telangana- 502284, India.}
 \email{ma18resch11001@iith.ac.in}
\maketitle
\begin{abstract}
	In this article, we completely characterize absolutely norm attaining Hankel operators and absolutely minimum attaining Toeplitz operators. We also improve \cite[Theorem 2.1]{RGSSSTOE1}, by characterizing the absolutely norm attaining Toeplitz operator $T_\varphi$ in terms of the symbol $\varphi \in L^\infty$.
\end{abstract}

\section{Introduction}
Let $L^2$ denote the Lebesgue space of all square integrable functions on the unit circle $\mathbb{T}$ in the complex plane with respect to the normalized Lebesgue measure $\mu$ and $L^\infty$ be the Banach space of all complex valued essentially bounded $\mu$-measurable functions on $\mathbb{T}$. Let $H^2$ denote the closed subspace of $L^2$ consisting of all those functions whose negative Fourier co-efficients vanish.

For any $\varphi \in L^\infty$, the Toeplitz operator $T_\varphi :H^2 \to H^2$ is defined by
\[T_\varphi f = P (\varphi f) \quad  \text{for all} \ f  \in H^2,\]
and the Hankel operator  $H_\varphi :H^2 \to H^2$ is defined by
\[H_\varphi f = J(I-P)\varphi f \quad \text{for all} \ f  \in H^2,\]
where $(\varphi f)(z) = \varphi(z) f(z)$ for all $z \in \mathbb{T}$, $P$ is an orthogonal projection of $L^2$ onto $H^2$ and $J(z^{-n}) = z^{n-1}, n= 0,\pm1,\pm2, \dots$ is the unitary operator on $L^2$.

Toeplitz and Hankel operators are well studied in matrix theory, operator theory, function theory etc., as they find applications in many fields. The initial study of Toeplitz operators was done by Otto Toeplitz and thereafter Brown and Halmos in \cite{BROWN} discussed some of the important properties of them. Further, H.Widom, R. G. Douglas, D. Sarason etc., also worked on Toeplitz operators and most of their results can be found in \cite{DOU}. We refer to \cite{MAR,Peller} for the results on Hankel operators.

One of the important properties of any bounded linear operator is it's norm attaining property. Such a property is vastly studied on Banach spaces (for example, see \cite{AcostaBPBP,BishopPhelps,Lindenstrauss}). A few developments of norm attaining operators on Hilbert spaces can be found in \cite{Carvjalnorm,EnfloKoverSmithies}. For more details on norm attaining Toeplitz and Hankel operators we refer to \cite{Brownpariso,YOS2}. It is important to note that the set of all compact operators and isometries defined between the Hilbert spaces are always norm attaining. Moreover, on restriction of such operators to any closed subspace of the Hilbert space is also norm attaining. This observation initiated the study of new class of operators by Carvajal and Neves \cite{Carvjalnorm} and called them the absolutely norm attaining operators or $\mathcal{AN}$-operators.

A natural counterpart of the norm attaining and the absolutely norm attaining operators are the minimum attaining and the absolutely minimum attaining operators, which are also introduced by Carvajal and Neves in \cite{CARmin}.

Due to the lack of non-zero Toeplitz compact operators on $H^2$, the authors in \cite{RGSSSTOE1} were interested to look at $\mathcal{AN}$-Toeplitz operators, since this set is non-empty as it contains isometric Toeplitz operators. The authors also characterized absolutely minimum attaining $(\mathcal{AM})$ Hankel operators in the same article.

 In this article, we address the following question which is posed in \cite{RGSSSTOE1}.
 \begin{question}
 	Characterize all the $\mathcal{AM}$-Toeplitz operators and all the $\mathcal{AN}$-Hankel operators.
 \end{question}
 In \cite[Theorem 2.1]{RGSSSTOE1}, a characterization of an $\mathcal{AN}$-Toeplitz operator $T_\varphi, \varphi \in L^\infty$ is given as follows:
 \begin{theorem}
 		Let $\varphi \in L^\infty$. Then $T_\varphi \in \mathcal{AN}(H^2)$ if and only if $\|\varphi\|_\infty I - |T_\varphi| \in \mathcal{F}(H^2)_+$.
 \end{theorem}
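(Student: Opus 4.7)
The plan is to reduce the question to the positive operator $|T_\varphi|$ and then apply the Pandey--Paulsen structural characterization of positive $\mathcal{AN}$-operators together with the classical identity $\|T_\varphi\|_e = \|\varphi\|_\infty$. Write $m := \|\varphi\|_\infty$ throughout. Recall also that $T \in \mathcal{AN}(H^2)$ if and only if $|T| \in \mathcal{AN}(H^2)$: this follows from the polar decomposition $T = U|T|$ since $\|Tx\| = \||T|x\|$ for every $x$, so that $T|_M$ and $|T||_M$ attain their (common) norm on the same subspaces.

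For the sufficiency direction I would assume $mI - |T_\varphi| = F$ is positive finite-rank, so that $|T_\varphi|$ acts as the scalar $m$ on the cofinite-dimensional subspace $N := \ker F$. For an arbitrary closed subspace $M \subseteq H^2$: if $\dim M < \infty$, then $|T_\varphi||_M$ attains its norm trivially; if $\dim M = \infty$, then $M \cap N \neq \{0\}$, and for any unit vector $x \in M \cap N$ we have $\||T_\varphi|x\| = m = \||T_\varphi|\|$, so $|T_\varphi||_M$ attains its norm. Hence $|T_\varphi| \in \mathcal{AN}(H^2)$, which transfers to $T_\varphi \in \mathcal{AN}(H^2)$.

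For necessity, suppose $T_\varphi \in \mathcal{AN}(H^2)$, so that $|T_\varphi|$ is a positive $\mathcal{AN}$-operator. By the Pandey--Paulsen characterization, there exist $\alpha \geq 0$, a positive compact operator $K$, and a positive finite-rank operator $F$ on $H^2$ such that
\[
|T_\varphi| = \alpha I + K - F.
\]
Passing to the Calkin algebra, $\||T_\varphi|\|_e = \alpha$. I claim $\alpha = m$: it is standard that $\|T_\varphi\|_e = \|\varphi\|_\infty$, the $\leq$ direction being trivial, and the $\geq$ direction coming from testing against the weakly null sequence $f_n = z^n g$, $\|g\| = 1$, for which a direct computation yields $\|T_\varphi f_n\| \to \|\varphi g\|_{L^2}$, while outer functions $g$ make $\|\varphi g\|_{L^2}$ arbitrarily close to $m$. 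Since the Calkin algebra is a C$^*$-algebra, $\||T_\varphi|\|_e = \|T_\varphi\|_e = m$, so $\alpha = m$. Because $\||T_\varphi|\| = m$ forces $|T_\varphi| \leq mI$, we obtain $K - F \leq 0$, i.e., $0 \leq K \leq F$. A positive operator dominated by a positive finite-rank operator is itself finite-rank (if $Fx = 0$ then $0 \leq \langle Kx,x\rangle \leq \langle Fx,x\rangle = 0$, so $\ker F \subseteq \ker K$ and $\operatorname{range}(K) \subseteq \operatorname{range}(F)$). Therefore $mI - |T_\varphi| = F - K$ is positive and finite-rank.

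The main obstacle is pinning down the constant $\alpha$ produced by the Pandey--Paulsen decomposition as exactly $\|\varphi\|_\infty$. Without the essential-norm identity $\|T_\varphi\|_e = \|\varphi\|_\infty$, one could not rule out a smaller ``background'' scalar, and the rest of the argument would collapse. Once this identity is invoked, the remainder is a routine positivity/domination manipulation.
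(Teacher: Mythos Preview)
The paper does not contain a proof of this statement: it is quoted verbatim from the authors' earlier work \cite{RGSSSTOE1} (Theorem~2.1 there) and used as a black box to derive the new Theorem~\ref{ANToeplitz}. So there is nothing in the present paper to compare your argument against.

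That said, your proof is correct and self-contained. The sufficiency direction is clean. In the necessity direction, the crucial Toeplitz-specific input is the identity $\|T_\varphi\|_e = \|\varphi\|_\infty$, which you handle via the weakly null sequence $z^n g$; a slightly quicker way to see that $\sup_{\|g\|=1,\,g\in H^2}\|\varphi g\|_{L^2} = \|\varphi\|_\infty$ (avoiding the outer-function construction) is to note that $\|\varphi g\|_{L^2}^2 = \langle T_{|\varphi|^2} g, g\rangle$ and $\|T_{|\varphi|^2}\| = \||\varphi|^2\|_\infty = \|\varphi\|_\infty^2$ by Brown--Halmos. The remaining steps---identifying $\alpha$ with the essential norm and using $0\le K\le F$ to force $K$ to be finite-rank---are exactly right. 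One minor attribution point: the structure theorem you invoke in the form $|T_\varphi| = \alpha I + K - F$ with $K\ge 0$ compact, $F\ge 0$ finite-rank is the version appearing in \cite{venkuramesh} (and used elsewhere in this paper), which refines the Pandey--Paulsen characterization; your argument does not need the additional orthogonality condition $KF=0$, so the coarser form suffices.
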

Here we characterize such an operator in terms of the symbol $\varphi$ itself, which improves the about result.

Next we characterize $\mathcal{AM}$-Toeplitz operators and further prove that there are no non-compact $\mathcal{AN}$-Hankel operators. All these results are found in section 2.

In the remaining part of this section, we give a few notations and definitions which are necessary for developing the article.

\subsection{Preliminaries}
Let $H_1,H_2$ be infinite dimensional complex Hilbert spaces and $\mathcal{B}(H_1,H_2)$ denote the Banach space of all bounded linear operators from $H_1$ to $H_2$.
If $T \in \mathcal{B}(H_1,H_2)$, then $T$ is said to be finite rank if its range space is finite dimensional and  compact if $T$ maps any bounded set in $H_1$ to a pre-compact set in $H_2$.

\begin{definition}\cite[Definition 1.1, 1.2]{Carvjalnorm}
	An operator $T \in \mathcal{B}(H_1,H_2)$ is called norm attaining if there exists a unit vector $x \in H_1$ such that $\|Tx\| = \|T\|$. If for every closed subspace $M$ of $H_1$, $T|_M : M \to H_2$ is norm attaining, then $T$ is called an absolutely norm attaining or $\mathcal{AN}$-operator.
\end{definition}

\begin{definition}\cite[Definition 1.1, 1.4]{CARmin}
	An operator $T \in \mathcal{B}(H_1, H_2)$ is said to be minimum attaining if there exists a unit vector $x \in H_1$ such that $m(T) = \|Tx\|$, where \[m(T):= \inf\{\|Tx\| : x \in H_1, \|x\| = 1 \}.\]
	
	 If for every closed subspace $M$ of $H_1$, $T|_M : M \to H_2$ is minimum attaining, then $T$ is said to be absolutely minimum attaining or $\mathcal{AM}$-operator.
\end{definition}

For more details on $\mathcal{AN}$-operators we refer to \cite{Carvjalnorm,PandeyPaulsen,Rameshpara,venkuramesh} and for that on $\mathcal{AM}$-operators we refer to \cite{NeeruRamesh,CARmin,GAN}.

Let $\mathcal{F}(H^2)$ and $\mathcal{K}(H^2)$ denote the set of all finite rank and compact operators on $H^2$, respectively.
The set of all $\mathcal{AN}$ and $\mathcal{AM}$-operators on $H^2$ are denoted by $\mathcal{AN}(H^2)$ and $\mathcal{AM}(H^2)$, respectively.

Let $C(\mathbb{T})$ denote the set of all complex valued continuous functions on $\mathbb{T}$, $H^\infty = H^2 \cap L^\infty$ and the Sarason algebra $H^\infty + C(\mathbb{T})$ is a closed subalgebra of $L^\infty$. The notations and terminologies used in \cite{RGSSSTOE1} will be adhered throughout this article.

\section{Main Results}

We start this section by recalling a result which connects both Toeplitz and Hankel operators.

\begin{prop}\cite[Proposition 8]{YOS2} \label{H&Treln}
	For any $\varphi, \psi \in L^\infty$, we have $H^*_\psi H_\varphi = T_{\overline{\psi}\varphi} - T_{\overline{\psi}} T_\varphi$.
\end{prop}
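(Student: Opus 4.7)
The plan is to verify the identity by computing $H_\psi^* H_\varphi$ directly from the definitions. The first step is to work out an explicit formula for the adjoint $H_\psi^*$ on $H^2$. Noting that $J(z^k)=z^{-k-1}$ for every integer $k$, one checks $J^2=I$, so $J$ is a self-adjoint unitary involution that interchanges $H^2$ and $(H^2)^\perp$. For $f,g\in H^2$,
\[
\langle H_\psi f,g\rangle=\langle J(I-P)(\psi f),g\rangle=\langle (I-P)(\psi f),Jg\rangle.
\]
Since $Jg\in (H^2)^\perp$, $(I-P)Jg=Jg$, so by self-adjointness of $I-P$ the inner product collapses to $\langle \psi f,Jg\rangle=\langle f,\overline{\psi}\,Jg\rangle$. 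Thus
\[
H_\psi^* g=P(\overline{\psi}\,Jg)\qquad (g\in H^2).
\]

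With this formula in hand, I would substitute $g=H_\varphi f=J(I-P)(\varphi f)$ and use $J^2=I$ to eliminate the two copies of $J$:
\[
H_\psi^* H_\varphi f=P\bigl(\overline{\psi}\,J\cdot J(I-P)(\varphi f)\bigr)=P\bigl(\overline{\psi}\,(I-P)(\varphi f)\bigr).
\]
Expanding $(I-P)(\varphi f)=\varphi f-P(\varphi f)=\varphi f-T_\varphi f$ and invoking linearity of $P$ gives
\[
H_\psi^* H_\varphi f=P(\overline{\psi}\varphi f)-P(\overline{\psi}\,T_\varphi f)=T_{\overline{\psi}\varphi}f-T_{\overline{\psi}}T_\varphi f,
\]
which is precisely the claimed identity.

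The only genuinely nontrivial step is the computation of $H_\psi^*$; after that the result is a straightforward algebraic manipulation relying on $J^{2}=I$, the fact that $J$ maps $(H^2)^\perp$ into $H^2$, and the linearity of the projection $P$. I do not anticipate any analytic subtlety, since everything in sight is bounded on $L^2$ and the identities hold pointwise on the dense set of trigonometric polynomials and then extend to all of $H^2$ by continuity.
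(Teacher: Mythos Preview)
Your proof is correct. The paper does not supply its own proof of this proposition; it is simply quoted from \cite[Proposition~8]{YOS2}, so there is nothing to compare against beyond noting that your direct computation---identifying $H_\psi^*$ via $J^*=J$, $J^2=I$, and $J(H^2)=(H^2)^\perp$, then collapsing $J^2$ and splitting $(I-P)(\varphi f)$---is the standard argument and goes through without issue.
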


The following results characterize all finite rank and compact Hankel operators.

\begin{theorem} \cite[Corollary 3.2, Page 21]{Peller} \label{finiteHankel}
Let $\varphi \in L^\infty$. Then $H_\varphi \in \mathcal{F}(H^2)$ if and only if $(I - P )\varphi$ is a rational function.
\end{theorem}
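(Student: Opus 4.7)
The plan is to reduce this statement to the classical Kronecker theorem characterizing finite-rank Hankel matrices via rational generating functions. First I would write $\varphi = \sum_{k \in \mathbb{Z}} \hat{\varphi}(k) z^k$ and compute the matrix of $H_\varphi$ in the orthonormal basis $\{z^n\}_{n \geq 0}$ of $H^2$. A short Fourier-coefficient calculation using the definitions of $P$ and $J$ shows that the $(j,m)$-entry is $\hat{\varphi}(-j-m-1)$, so $H_\varphi$ depends only on $(I-P)\varphi$ and its matrix is a classical Hankel matrix $[b_{j+m}]$ with $b_k = \hat{\varphi}(-k-1)$. In particular, the series $\sum_{k \geq 0} b_k z^{-k-1}$ is precisely $(I-P)\varphi$, so rationality of $(I-P)\varphi$ as a function on $\mathbb{T}$ corresponds, under the change of variable $w = z^{-1}$, to rationality of $\sum_{k \geq 0} b_k w^k$.

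Next I would observe that $H_\varphi$ has finite rank if and only if the columns $\{H_\varphi z^m\}_{m \geq 0}$ span a finite-dimensional subspace of $H^2$. Since the $m$-th column corresponds to the shifted sequence $(b_{j+m})_{j \geq 0}$, this happens precisely when only finitely many shifts of $(b_k)$ are linearly independent, i.e., the sequence $(b_k)$ eventually satisfies a linear recurrence. Standard generating-function theory then identifies such sequences with rational generating functions $p(w)/q(w)$, where $q$ is the characteristic polynomial of the recurrence and $\deg p < \deg q$. Reversing the change of variable returns rationality of $(I-P)\varphi$, whose poles must lie in $\{|z|<1\}$ because $(I-P)\varphi \in L^2(\mathbb{T})$ is already square integrable on the circle.

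The main obstacle is the careful verification of Kronecker's theorem itself, i.e., the equivalence between finite rank of the infinite Hankel matrix $[b_{j+m}]$, the sequence $(b_k)$ eventually satisfying a linear recurrence, and rationality of its generating function. The forward direction extracts a recurrence from a linear dependence among finitely many rows of the matrix; the converse uses the Cauchy-product identity $q(w) \cdot \sum_{k \geq 0} b_k w^k = p(w)$ to show that the rank is bounded by $\deg q$. Both steps are elementary, but the bookkeeping regarding the initial index at which the recurrence begins, and the translation back to the function-theoretic statement on $\mathbb{T}$, is where the care is needed.
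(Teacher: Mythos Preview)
The paper does not supply a proof of this theorem at all; it is simply quoted from Peller's monograph as a known result, with no argument given. Your proposal is therefore not competing with anything in the paper---you are sketching the classical Kronecker-theorem proof, which is essentially the argument one finds in the cited source. The matrix computation, the identification of $(I-P)\varphi$ with the generating series $\sum_{k\ge 0} b_k z^{-k-1}$, and the equivalence between finite rank of $[b_{j+m}]$, a linear recurrence on $(b_k)$, and rationality of the generating function are all correct as outlined.
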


\begin{theorem}\cite[Corollary 4.3.3, Page 145]{MAR} \label{compactHankel} Let $\varphi \in L^\infty$. Then $H_\varphi \in \mathcal{K}(H^2)$ if and only if $\varphi \in H^\infty +C(\mathbb{T})$.
\end{theorem}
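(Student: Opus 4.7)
The plan is to prove the two implications separately, and to expect the forward direction to require substantially more than the reverse.

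For the implication $(\Leftarrow)$, suppose $\varphi = h + c$ with $h \in H^\infty$ and $c \in C(\mathbb{T})$. If $f \in H^2$ then $hf \in H^2$, so $(I-P)(hf) = 0$ and hence $H_h = 0$; thus $H_\varphi = H_c$. By Stone--Weierstrass, $c$ is the uniform limit of trigonometric polynomials $p_n$. Since $H_{z^k} = 0$ for $k \geq 0$ and $H_{z^{-m}}$ has finite rank for $m \geq 1$, each $H_{p_n}$ is finite rank. The elementary bound $\|H_\psi\| \leq \|\psi\|_\infty$ then yields $\|H_c - H_{p_n}\| \leq \|c - p_n\|_\infty \to 0$, so $H_\varphi$ is a norm limit of finite-rank operators and hence compact.

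For the implication $(\Rightarrow)$, my approach would go through Hartman's refinement of Nehari's theorem, namely the essential-norm identity
\[
\|H_\varphi\|_e \;=\; \operatorname{dist}_{L^\infty}\!\bigl(\varphi,\, H^\infty + C(\mathbb{T})\bigr),
\]
where $\|\cdot\|_e$ denotes the essential norm. One inequality here is immediate from the easy direction: for any $g \in H^\infty + C(\mathbb{T})$ the operator $H_g$ is compact, so $\|H_\varphi\|_e \leq \|H_\varphi - H_g\| \leq \|\varphi - g\|_\infty$. Once the identity is established, compactness of $H_\varphi$ forces $\|H_\varphi\|_e = 0$, and Sarason's theorem---that $H^\infty + C(\mathbb{T})$ is a closed subalgebra of $L^\infty$---lets us conclude $\varphi \in H^\infty + C(\mathbb{T})$.

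The main obstacle is the remaining inequality in Hartman's identity---converting a compact approximant of $H_\varphi$ into a genuine $L^\infty$-approximant of $\varphi$ lying in $H^\infty + C(\mathbb{T})$. The natural strategy is to apply Nehari's theorem (a duality/Hahn--Banach argument producing an $H^\infty$-symbol whose distance to $\varphi$ equals $\|H_\varphi\|$) to the shift-truncations $(I - P_n) H_\varphi$, whose norms tend to $\|H_\varphi\|_e$ precisely because $H_\varphi$ is compact; an Adamyan--Arov--Krein-type lifting then assembles the truncated symbols into the desired $g$. This is where the substantial harmonic-analytic work lies and where one cannot avoid machinery beyond the elementary estimates that sufficed for the easy direction.
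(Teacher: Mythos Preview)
The paper does not supply its own proof of this statement: it is quoted verbatim as a background result from \cite[Corollary 4.3.3]{MAR} and used as a black box in the proofs of Theorems~\ref{AMToeplitz} and~\ref{ANHankel}. So there is no in-paper argument to compare your proposal against.

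That said, your sketch is a faithful outline of the standard textbook proof (essentially the one in the cited reference). The easy direction is exactly right. For the hard direction, invoking the full Adamyan--Arov--Krein lifting is more than is needed: once you have Nehari's formula $\|H_\psi\| = \operatorname{dist}_{L^\infty}(\psi, H^\infty)$, the usual route exploits the intertwining relation between the Hankel operator and the shift to obtain $\|H_\varphi S^n\| \to 0$ from compactness (since $(S^*)^n \to 0$ strongly), and then Nehari applied to the shifted symbols directly produces $h_n \in H^\infty$ with $\|\varphi - \bar z^{\,n} h_n\|_\infty \to 0$, placing $\varphi$ in the closure of $H^\infty + C(\mathbb{T})$; Sarason's closedness finishes it. Your ``shift-truncations $(I-P_n)H_\varphi$'' idea is essentially this argument in disguise, so the only adjustment is that no separate AAK step is required.
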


Now we characterize an $\mathcal{AN}$-Toeplitz operator $T_\varphi$ in terms of $\varphi$.

\begin{theorem}\label{ANToeplitz}
Let $\varphi \in L^\infty$. Then $T_\varphi \in \mathcal{AN}(H^2)$ if and only if $|\varphi| = \|\varphi\|_\infty$ a.e.$(\mu)$ and $(I-P)\varphi$ is a rational function.
\end{theorem}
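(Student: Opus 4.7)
The plan is to use the previous theorem (the $\mathcal{AN}$-characterization via $\|\varphi\|_\infty I - |T_\varphi| \in \mathcal{F}(H^2)_+$) together with Proposition~\ref{H&Treln} to translate the finite-rank condition on $|T_\varphi|$ into pointwise/symbol conditions on $\varphi$, and then invoke Theorem~\ref{finiteHankel} to recognize the Hankel part.

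First I would specialize Proposition~\ref{H&Treln} to $\psi = \varphi$ to obtain $T_\varphi^* T_\varphi = T_{|\varphi|^2} - H_\varphi^* H_\varphi$, and rearrange this as
\[
\|\varphi\|_\infty^2 I - |T_\varphi|^2 \;=\; T_{\|\varphi\|_\infty^2 - |\varphi|^2} \;+\; H_\varphi^* H_\varphi.
\]
Both summands on the right are positive operators (the first because $\|\varphi\|_\infty^2 - |\varphi|^2 \ge 0$ a.e., the second manifestly). Assuming $\varphi \neq 0$ (the zero case being trivial), the operator $\|\varphi\|_\infty I + |T_\varphi|$ is bounded below by $\|\varphi\|_\infty I$, hence invertible. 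The identity
\[
\|\varphi\|_\infty I - |T_\varphi| \;=\; \bigl(\|\varphi\|_\infty^2 I - |T_\varphi|^2\bigr)\bigl(\|\varphi\|_\infty I + |T_\varphi|\bigr)^{-1}
\]
then shows that $\|\varphi\|_\infty I - |T_\varphi|$ is finite rank if and only if $\|\varphi\|_\infty^2 I - |T_\varphi|^2$ is finite rank. Thus, by the previous $\mathcal{AN}$-characterization, $T_\varphi \in \mathcal{AN}(H^2)$ if and only if the sum $T_{\|\varphi\|_\infty^2 - |\varphi|^2} + H_\varphi^* H_\varphi$ is finite rank.

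Next, I would use the elementary lemma that if $A, B \ge 0$ and $A + B$ has finite rank, then each of $A$ and $B$ has finite rank, which follows from $\ker(A+B) \subseteq \ker A \cap \ker B$ (since $\langle (A+B)x,x\rangle = 0$ forces $\langle Ax,x\rangle = \langle Bx,x\rangle = 0$ and hence $Ax = Bx = 0$ by positivity). Applied here, the finite-rankness of the sum is equivalent to the simultaneous finite-rankness of $T_{\|\varphi\|_\infty^2 - |\varphi|^2}$ and $H_\varphi^* H_\varphi$. For the first, I would invoke the classical fact (going back to Brown–Halmos) that the only compact Toeplitz operator is the zero operator; since $\|\varphi\|_\infty^2 - |\varphi|^2 \ge 0$, this forces $|\varphi| = \|\varphi\|_\infty$ a.e. For the second, $\ker(H_\varphi^* H_\varphi) = \ker H_\varphi$, so $H_\varphi^* H_\varphi$ is finite rank iff $H_\varphi$ is; by Theorem~\ref{finiteHankel}, this is equivalent to $(I-P)\varphi$ being a rational function.

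The hard part is the passage from ``positive Toeplitz operator of finite rank'' to ``symbol is zero a.e.''; everything else is algebraic rearrangement. That step, however, is standard and can be cited rather than reproved. Combining both directions gives the desired equivalence, matching the statement of the theorem.
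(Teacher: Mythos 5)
Your proof is correct and follows essentially the same route as the paper's: both rest on Proposition \ref{H&Treln} together with the finite-rank criterion $\|\varphi\|_\infty I - |T_\varphi| \in \mathcal{F}(H^2)_+$ from the predecessor paper and the finite-rank Hankel characterization of Theorem \ref{finiteHankel}. The only difference is that the paper simply cites its earlier Corollary 2.3 for the necessity of $|\varphi| = \|\varphi\|_\infty$ a.e.\ $(\mu)$, whereas you re-derive it from the decomposition $\|\varphi\|_\infty^2 I - |T_\varphi|^2 = T_{\|\varphi\|_\infty^2 - |\varphi|^2} + H_\varphi^* H_\varphi$ via the positive-sum lemma and the fact that the only compact Toeplitz operator is zero --- a slightly more self-contained treatment of the forward direction.
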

\begin{proof}
	By \cite[Theorem 2.1 and Corollary 2.3]{RGSSSTOE1}, we have $\|\varphi\|^2_\infty I - T^*_\varphi T_\varphi = F$, where $F \in \mathcal{F}(H^2)$ and $|\varphi| = \|\varphi\|_\infty$ a.e.$(\mu)$. Hence by Proposition \ref{H&Treln}, we get $H^*_\varphi H_\varphi = F \in \mathcal{F}(H^2)$. Therefore by Theorem \ref{finiteHankel}, it follows that $(I-P)\varphi$ is a rational function.
	
Conversely, if $|\varphi| = \|\varphi\|_\infty$ a.e.$(\mu)$ and $(I-P)\varphi$ is a rational function, then $H^*_\varphi H_\varphi = F$, where $F \in \mathcal{F}(H^2)_{+}$. Again by Proposition \ref{H&Treln}, we get $\|\varphi\|^2_\infty I-T^*_\varphi T_\varphi =   F$. Hence by \cite[Theorem 2.1]{RGSSSTOE1}, $T_\varphi \in \mathcal{AN}(H^2)$.
\end{proof}
\begin{corollary}
	Let $\varphi \in L^\infty$. Then $T_\varphi \in \mathcal{AN}(H^2)$ if and only if $|\varphi| = \|\varphi\|_\infty$ a.e.$(\mu)$ and $H_\varphi \in \mathcal{AM}(H^2)$.
\end{corollary}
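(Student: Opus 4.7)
The plan is to read the corollary as a rewording of Theorem \ref{ANToeplitz}, trading the clause ``$(I-P)\varphi$ is rational'' for ``$H_\varphi \in \mathcal{AM}(H^2)$''. By Theorem \ref{finiteHankel}, the first clause is equivalent to $H_\varphi \in \mathcal{F}(H^2)$, so it suffices to show that, once $|\varphi|=\|\varphi\|_\infty$ a.e.$(\mu)$ is in force, the conditions $H_\varphi \in \mathcal{F}(H^2)$ and $H_\varphi \in \mathcal{AM}(H^2)$ coincide.

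For the forward direction, Theorem \ref{ANToeplitz} applied to $T_\varphi \in \mathcal{AN}(H^2)$ delivers both $|\varphi|=\|\varphi\|_\infty$ a.e.$(\mu)$ and the rationality of $(I-P)\varphi$, whence $H_\varphi \in \mathcal{F}(H^2)$ by Theorem \ref{finiteHankel}. Any finite rank operator lies in $\mathcal{AM}(H^2)$: restricted to any closed infinite dimensional subspace $M$ it still has finite dimensional range, so $M\cap\ker H_\varphi \neq \{0\}$ and the minimum $0$ is attained, while on finite dimensional subspaces the minimum is attained by compactness of the unit sphere.

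For the reverse direction, assume $|\varphi|=\|\varphi\|_\infty$ a.e.$(\mu)$ and $H_\varphi \in \mathcal{AM}(H^2)$. I would invoke the characterization of $\mathcal{AM}$-Hankel operators established in \cite{RGSSSTOE1} to force $H_\varphi \in \mathcal{F}(H^2)$; then Theorem \ref{finiteHankel} supplies the rationality of $(I-P)\varphi$, and Theorem \ref{ANToeplitz} concludes that $T_\varphi \in \mathcal{AN}(H^2)$. If a more hands-on route is desired, Proposition \ref{H&Treln} with $\psi=\varphi$ and $|\varphi|^2=\|\varphi\|_\infty^2$ a.e.\ yields the identity $H^*_\varphi H_\varphi + T^*_\varphi T_\varphi = \|\varphi\|_\infty^2 I$, and the factorization $|H_\varphi|^2 = (\|\varphi\|_\infty I - |T_\varphi|)(\|\varphi\|_\infty I + |T_\varphi|)$, combined with the invertibility of the second factor, translates finite rank information about $H_\varphi$ into the condition $\|\varphi\|_\infty I - |T_\varphi| \in \mathcal{F}(H^2)_+$ required by \cite[Theorem 2.1]{RGSSSTOE1}.

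The principal obstacle is the reverse direction, namely promoting $H_\varphi \in \mathcal{AM}(H^2)$ to $H_\varphi \in \mathcal{F}(H^2)$; this is precisely what the $\mathcal{AM}$-Hankel characterization of \cite{RGSSSTOE1} is designed to handle, so once that result is cited the remaining steps are bookkeeping among Theorem \ref{ANToeplitz}, Theorem \ref{finiteHankel}, and Proposition \ref{H&Treln}.
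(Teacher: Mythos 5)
Your proposal is correct and follows essentially the same route as the paper: the paper also reduces the corollary to the equivalence $H_\varphi \in \mathcal{AM}(H^2) \iff H_\varphi \in \mathcal{F}(H^2)$ (citing \cite[Theorem 3.3]{RGSSSTOE1}) and then combines Theorems \ref{finiteHankel} and \ref{ANToeplitz}. Your extra direct verification that finite rank operators are $\mathcal{AM}$, and the alternative ``hands-on'' route via Proposition \ref{H&Treln}, are sound but not needed beyond what the cited result already supplies.
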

\begin{proof}
By \cite[Theorem 3.3]{RGSSSTOE1}, we have $H_\varphi \in \mathcal{AM}(H^2)$ if and only if $H_\varphi \in \mathcal{F}(H^2)$. Now, from Theorems \ref{finiteHankel} and \ref{ANToeplitz}, we get the required conclusion.
\end{proof}

\begin{remark} If $T_\varphi \in \mathcal{AN}(H^2)$, then it satisfies $T^*_\varphi T_\varphi + H^*_\varphi H_\varphi = \|\varphi\|^2_\infty I$.
\end{remark}

Next we characterize all $\mathcal{AM}$-Toeplitz operators.

\begin{theorem} \label{AMToeplitz}
	Let $\varphi \in L^\infty$. Then $T_\varphi \in \mathcal{AM}(H^2)$ if and only if $|\varphi|= \alpha$ a.e.$(\mu)$ and $\varphi \in H^\infty +C(\mathbb{T})$, where $\alpha \in \sigma_{ess}(|T_\varphi|)$.
\end{theorem}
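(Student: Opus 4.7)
The plan is to reduce to the positive case via the standard equivalence $T_\varphi \in \mathcal{AM}(H^2) \iff |T_\varphi| \in \mathcal{AM}(H^2)$, and then exploit the known structure theorem for positive $\mathcal{AM}$-operators: a positive $P \in \mathcal{B}(H^2)$ lies in $\mathcal{AM}(H^2)$ iff $P = \alpha I - K + F$ for some $\alpha \ge 0$, $K \in \mathcal{K}(H^2)_+$, $F \in \mathcal{F}(H^2)_+$ with $KF = 0$, in which case $\sigma_{ess}(P) = \{\alpha\}$. The route mirrors the proof of Theorem \ref{ANToeplitz}, with compact replacing finite rank.

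\textbf{Necessity.} Applying the structure theorem to $|T_\varphi|$, write $|T_\varphi| = \alpha I - K + F$. Then $|T_\varphi| - \alpha I \in \mathcal{K}(H^2)$, whence $|T_\varphi|^2 - \alpha^2 I = (|T_\varphi| - \alpha I)(|T_\varphi| + \alpha I) \in \mathcal{K}(H^2)$. Proposition \ref{H&Treln} gives
\[
T_{|\varphi|^2 - \alpha^2}\;=\;\bigl(T_\varphi^* T_\varphi - \alpha^2 I\bigr)\,+\,H_\varphi^* H_\varphi,
\]
so $T_{|\varphi|^2 - \alpha^2}$ and $H_\varphi^* H_\varphi$ differ by a compact operator and hence share essential spectrum. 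The left side is a self-adjoint Toeplitz operator whose essential spectrum equals the essential range of $|\varphi|^2 - \alpha^2$; the right side is positive, so $\sigma_{ess}(H_\varphi^* H_\varphi) \subseteq [0,\infty)$. Consequently $|\varphi| \ge \alpha$ a.e.$(\mu)$. Since $|T_\varphi| - \alpha I$ is compact we have $\|T_\varphi\|_e = \alpha$, and combining with the standard identity $\|T_\varphi\|_e = \|\varphi\|_\infty$ (a consequence of $T_\varphi|_{z^n H^2}$ retaining norm $\|\varphi\|_\infty$ while compact operators vanish on these shift-invariant subspaces asymptotically) forces $\alpha = \|\varphi\|_\infty$. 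Then $|\varphi| \le \|\varphi\|_\infty = \alpha$ a.e.\ combines with the earlier $|\varphi| \ge \alpha$ to give $|\varphi| = \alpha$ a.e. The displayed equation then reduces to $0 = (|T_\varphi|^2 - \alpha^2 I) + H_\varphi^* H_\varphi$, so $H_\varphi^* H_\varphi \in \mathcal{K}(H^2)$ and hence $H_\varphi \in \mathcal{K}(H^2)$; Theorem \ref{compactHankel} delivers $\varphi \in H^\infty + C(\mathbb{T})$.

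\textbf{Sufficiency.} Conversely, if $|\varphi| = \alpha$ a.e.\ and $\varphi \in H^\infty + C(\mathbb{T})$, then $T_{|\varphi|^2} = \alpha^2 I$ and Theorem \ref{compactHankel} gives $H_\varphi \in \mathcal{K}(H^2)$. Proposition \ref{H&Treln} then yields $|T_\varphi|^2 = \alpha^2 I - H_\varphi^* H_\varphi$, a positive compact perturbation of $\alpha^2 I$. Taking the positive square root via the continuous functional calculus produces $|T_\varphi| = \alpha I - K_1$ for some $K_1 \in \mathcal{K}(H^2)_+$, matching the structure theorem with trivial finite rank part. Therefore $|T_\varphi| \in \mathcal{AM}(H^2)$, so $T_\varphi \in \mathcal{AM}(H^2)$, and $\sigma_{ess}(|T_\varphi|) = \{\alpha\}$ certifies $\alpha \in \sigma_{ess}(|T_\varphi|)$.

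\textbf{Main obstacle.} The non-routine ingredient is the Toeplitz-specific identity $\|T_\varphi\|_e = \|\varphi\|_\infty$, needed to pin down $\alpha = \|\varphi\|_\infty$. Without it the essential-spectrum comparison only produces the one-sided bound $|\varphi| \ge \alpha$ a.e., which is insufficient to force $|\varphi|$ to be a.e.\ constant.
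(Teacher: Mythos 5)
Your proof is correct and follows essentially the same skeleton as the paper's: reduce to the structure theorem for positive $\mathcal{AM}$-operators, feed the resulting decomposition into Proposition \ref{H&Treln}, and invoke Theorem \ref{compactHankel} in both directions. The one genuine point of divergence is the step producing $|\varphi| = \alpha$ a.e.: the paper simply cites \cite[Proposition 2.18]{RGSSSTOE1} (which says precisely that $T_{\overline{\varphi}}T_\varphi - \alpha^2 I$ compact forces $|\varphi| = \alpha$ a.e.), whereas you re-derive this from scratch by combining the Hartman--Wintner spectral inclusion $\mathcal{R}(|\varphi|^2 - \alpha^2) \subseteq \sigma_{ess}(T_{|\varphi|^2-\alpha^2})$ with the essential norm identity $\|T_\varphi\|_e = \|\varphi\|_\infty$; this makes the argument self-contained at the cost of importing two classical Toeplitz facts, and you correctly flag the essential norm identity as the load-bearing ingredient (minor quibble: for a real symbol $\psi$ the essential spectrum of $T_\psi$ is the interval $[\operatorname{ess\,inf}\psi, \operatorname{ess\,sup}\psi]$, the convex hull of the essential range rather than the essential range itself, but the inclusion you actually use is the correct one). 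The other cosmetic difference --- working with $|T_\varphi|$ rather than $T_\varphi^* T_\varphi$, which obliges you to take a square root in the sufficiency direction --- changes nothing of substance, since the paper's cited equivalence $T_\varphi \in \mathcal{AM}(H^2) \iff T_\varphi^* T_\varphi \in \mathcal{AM}(H^2)$ transfers freely between the two.
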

\begin{proof}
	If $T_\varphi \in \mathcal{AM}(H^2)$,  then by \cite[Theorem 5.14]{GAN}, $T^*_\varphi T_\varphi \in \mathcal{AM}(H^2)$. From \cite[Theorem 5.8]{GAN}, we have $T_{\overline{\varphi}} T_\varphi = \alpha^2 I - K + F$, where $\alpha^2 \in \sigma_{ess}(T_{\overline{\varphi}}T_\varphi), \alpha \geq 0, K \in \mathcal{K}(H^2), F \in \mathcal{F}(H^2)$ with $K \leq \alpha^2 I$ and $KF=0$. Hence by \cite[Propsition 2.18]{RGSSSTOE1}, we get $|\varphi| = \alpha$ a.e.$(\mu)$ and by \cite[Lemma 3]{KOV}, $\alpha \in \sigma_{ess}(|T_\varphi|)$.
		By Proposition \ref{H&Treln}, we get $H^*_\varphi H_\varphi = K -F \in \mathcal{K}(H^2)$. Therefore $H_\varphi \in \mathcal{K}(H^2)$ and by Theorem \ref{compactHankel}, $\varphi \in H^\infty + C(\mathbb{T})$.
	
	Conversely, assume that $|\varphi| = \alpha$ a.e.$(\mu)$ and $\varphi \in H^\infty +C(\mathbb{T})$. Then $H^*_\varphi H_\varphi = K$, where $K \in \mathcal{K}(H^2)_{+}$. By Proposition \ref{H&Treln}, we get $T_{\overline{\varphi}}T_\varphi = \alpha^2I - K$. Then by \cite[Proposition 3.5]{RGSSSTOE1}, $T_{\overline{\varphi}}T_\varphi \in \mathcal{AM}(H^2)$ and hence $T_\varphi \in \mathcal{AM}(H^2)$ by \cite[Theorem 5.14]{GAN}.
\end{proof}

\begin{remark}
In \cite{RAMSSS}, it was proved that the operator norm closure of $\mathcal{AM}$-operators is the same as the operator norm closure of $\mathcal{AN}$-operators. Hence by Theorems \ref{ANToeplitz} and \ref{AMToeplitz}, we observe that the set of all $\mathcal{AN}$-Toeplitz operators is contained in the set of all $\mathcal{AM}$-Toeplitz operators. Further, the set  of all $\mathcal{AM}$-Toeplitz operators forms a closed subset of space of all Toeplitz operators on $H^2$.
\end{remark}

The next theorem shows that there is no non-compact absolutely norm attaining Hankel operator.  For a detailed study on compact Hankel operators we refer to \cite{Hartman}.
\begin{theorem} \label{ANHankel}
	Let $\varphi \in L^\infty$. Then $H_\varphi \in \mathcal{AN}(H^2)$ if and only if $H_\varphi \in \mathcal{K}(H^2)$.
\end{theorem}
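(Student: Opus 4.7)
The reverse direction is standard: every compact operator between Hilbert spaces lies in $\mathcal{AN}$, because the restriction of a compact operator to a closed subspace remains compact and attains its norm. This gives $H_\varphi\in\mathcal{K}(H^2)\Rightarrow H_\varphi\in\mathcal{AN}(H^2)$.

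For the forward direction I would argue by contradiction. The $\mathcal{AN}$-analogue of \cite[Theorem 5.14]{GAN} together with the Pandey-Paulsen structure theorem for positive $\mathcal{AN}$-operators yields
\[
|H_\varphi| = \alpha I + K - F
\]
with $\alpha\ge 0$, $K\in\mathcal{K}(H^2)_{+}$, $F\in\mathcal{F}(H^2)_{+}$, $KF=0$, and $F\le \alpha I+K$. Squaring, $H_\varphi^{*}H_\varphi - \alpha^{2} I$ is compact, so the problem reduces to showing $\alpha=0$: in that case $H_\varphi^{*}H_\varphi$ is compact and hence so is $H_\varphi$, via the polar decomposition together with the fact that the positive square root of a compact positive operator is compact.

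Suppose for contradiction that $\alpha>0$. Then $H_\varphi^{*}H_\varphi$ is a compact perturbation of the invertible operator $\alpha^{2} I$, hence Fredholm; in particular $H_\varphi$ has closed range and finite-dimensional kernel. The classical Hankel intertwining $H_\varphi S = S^{*} H_\varphi$, where $S\colon f\mapsto zf$ is the unilateral shift on $H^2$, forces $\ker H_\varphi$ to be $S$-invariant. By Beurling's theorem every non-trivial $S$-invariant subspace of $H^2$ is of the form $\theta H^2$ and is therefore infinite-dimensional, so $\ker H_\varphi=\{0\}$. Analogously, $\overline{H_\varphi(H^2)}$ is $S^{*}$-invariant and hence equals either $H^2$ or a model space $K_\eta=H^2\ominus \eta H^2$ for some inner $\eta$; finite-dimensional $K_\eta$ is excluded because $H_\varphi$ would otherwise inject the infinite-dimensional space $H^2$ into a finite-dimensional one.

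Hence $H_\varphi$ is a bounded bijection from $H^2$ onto $\overline{H_\varphi(H^2)}$ with bounded inverse, and the intertwining gives the similarity $S \sim S^{*}|_{\overline{H_\varphi(H^2)}}$. A direct Fredholm index calculation now supplies the contradiction: $\mathrm{ind}(S)=-1$ on $H^2$, whereas $\mathrm{ind}(S^{*})=+1$ on $H^2$ and $\mathrm{ind}(S^{*}|_{K_\eta})=0$ for every inner $\eta$ (a brief case analysis depending on whether $\eta(0)=0$ shows $\dim\ker(S^{*}|_{K_\eta})=\dim\ker(M_\eta)\in\{0,1\}$, where $M_\eta=P_{K_\eta}S|_{K_\eta}$ is the adjoint of $S^{*}|_{K_\eta}$). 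Since similarity preserves the Fredholm index, none of these cases is possible, and therefore $\alpha=0$. The main obstacle is precisely this index comparison: it is the step where the Hankel-specific intertwining with the shift and the backward shift breaks the would-be $\mathcal{AN}$ decomposition and rules out non-compact $\mathcal{AN}$-Hankel operators.
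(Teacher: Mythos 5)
Your proof is correct, but the forward direction takes a genuinely different route from the paper's. Both arguments start identically: the structure theorem for positive $\mathcal{AN}$-operators applied to $H_\varphi^*H_\varphi$ (equivalently to $|H_\varphi|$) shows that $H_\varphi^*H_\varphi-\alpha^2I$ is compact, and everything reduces to proving $\alpha=0$. The paper finishes on the Toeplitz side: substituting the identity $H_\varphi^*H_\varphi=T_{|\varphi|^2}-T_{\overline{\varphi}}T_\varphi$ from Proposition \ref{H&Treln}, it concludes that $T_{|\varphi|^2-\alpha}-T_{\overline{\varphi}}T_\varphi$ is compact and then invokes \cite[Exercise 7.7]{DOU} (uniqueness of the symbol in a compact perturbation of a product of Toeplitz operators) to force $|\varphi|^2-\alpha=|\varphi|^2$, i.e.\ $\alpha=0$. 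You instead argue intrinsically with Hankel operators: assuming $\alpha>0$ makes $H_\varphi^*H_\varphi$ Fredholm, the intertwining $H_\varphi S=S^*H_\varphi$ (which is valid for the paper's definition $H_\varphi=J(I-P)M_\varphi$, as a check on the matrix entries $\widehat{\varphi}(-j-k-1)$ confirms) combined with Beurling's theorem forces $\ker H_\varphi=\{0\}$ and identifies $\overline{\operatorname{ran}H_\varphi}$ as $H^2$ or an infinite-dimensional model space, and the resulting similarity of $S$ with the restricted backward shift is ruled out by the Fredholm index count $-1$ versus $+1$ or $0$; your index-zero computation for $S^*|_{K_\eta}$ is also correct. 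What each approach buys: the paper's is shorter and leans on a standard fact about the Toeplitz algebra; yours never leaves the Hankel world and in effect reproves the classical fact that a nonzero Hankel operator cannot be bounded below. On that note, your endgame can be streamlined: once you know $\ker H_\varphi=\{0\}$ and the range is closed, $H_\varphi$ is bounded below on all of $H^2$, yet $\|H_\varphi S^nf\|=\|S^{*n}H_\varphi f\|\to 0$ while $\|S^nf\|=\|f\|$, a contradiction that dispenses with the index comparison entirely.
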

\begin{proof}
	Clearly if $H_\varphi \in \mathcal{K}(H^2)$, then $H_\varphi \in \mathcal{AN}(H^2)$.
	
	On the other hand, if $H_\varphi \in \mathcal{AN}(H^2)$, then by \cite[Corollary 2.11]{venkuramesh}, $H^*_\varphi H_\varphi \in \mathcal{AN}(H^2)$. Hence by \cite[Theorem 2.5]{venkuramesh}, $H^*_\varphi H_\varphi = \alpha I - F + K$ for some $\alpha \geq 0$, $F \in \mathcal{F}(H^2)$, $K \in \mathcal{K}(H^2)$ with $F \leq \alpha I$ and $KF =0$. By Proposition \ref{H&Treln}, we get  $\alpha I - F + K = T_{|\varphi|^2} - T^*_\varphi T_\varphi$ or $T_{|\varphi|^2- \alpha} - T_{\overline{\varphi}} T_\varphi = K- F \in \mathcal{K}(H^2)$. So by \cite[Exercise 7.7]{DOU}, $|\varphi|^2- \alpha = \overline{\varphi}\varphi$, which implies $\alpha =0$. Hence $H^*_\varphi H_\varphi \in \mathcal{K}(H^2)$ which in turn results in $H_\varphi \in \mathcal{K}(H^2)$.
\end{proof}

\begin{corollary}
Let $\varphi \in L^\infty$. Then $H_\varphi \in \mathcal{AN}(H^2)$ if and only if $\varphi \in H^\infty + C(\mathbb{T})$.
\end{corollary}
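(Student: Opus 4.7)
The plan is essentially to chain together two biconditionals already available in the paper, so this corollary should require no substantive new argument. The preceding Theorem \ref{ANHankel} establishes the equivalence $H_\varphi \in \mathcal{AN}(H^2) \iff H_\varphi \in \mathcal{K}(H^2)$, and Theorem \ref{compactHankel} (the Hartman-type characterization quoted from \cite{MAR}) gives $H_\varphi \in \mathcal{K}(H^2) \iff \varphi \in H^\infty + C(\mathbb{T})$. Combining these two statements by transitivity immediately yields $H_\varphi \in \mathcal{AN}(H^2) \iff \varphi \in H^\infty + C(\mathbb{T})$, which is the claim.

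There is no real obstacle to identify here: the work has all been done in proving Theorem \ref{ANHankel} (where the delicate step was using Proposition \ref{H&Treln} together with the absence of non-trivial compact Toeplitz operators via \cite[Exercise 7.7]{DOU} to force $\alpha = 0$). The corollary is just a translation of the operator-theoretic conclusion of Theorem \ref{ANHankel} into a condition on the symbol $\varphi$. Accordingly, my write-up would simply cite Theorems \ref{ANHankel} and \ref{compactHankel} in one sentence and conclude.
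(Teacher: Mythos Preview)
Your proposal is correct and matches the paper's own proof exactly: the paper simply writes that the result follows by Theorems \ref{compactHankel} and \ref{ANHankel}.
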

\begin{proof}
The proof follows by Theorems \ref{compactHankel} and \ref{ANHankel}.
\end{proof}

\begin{remark}
	\begin{enumerate}
	\item The set of all $\mathcal{AM}$-Hankel operators is a  subset of the set of all $\mathcal{AN}$-Hankel operators.
		\item By Theorem \ref{ANHankel}, we have that the set of all $\mathcal{AN}$-Hankel operators is a closed linear subspace of space of all Hankel operators on $H^2$.
	\end{enumerate}
\end{remark}

We summarize the results of this paper.

For any $\varphi \in L^\infty$,
\begin{enumerate}
	\item $T_\varphi \in \mathcal{AN}(H^2)$ if and only if $|\varphi| = \|\varphi\|_\infty$ a.e.$(\mu)$ and $(I-P)\varphi$ is a rational function.
	\item $T_\varphi \in \mathcal{AM}(H^2)$ if and only if $|\varphi|= \alpha$ a.e.$(\mu)$ and $\varphi \in H^\infty +C(\mathbb{T})$, where $\alpha \in \sigma_{ess}(|T_\varphi|)$.
	\item $H_\varphi \in \mathcal{AN}(H^2)$ if and only if $\varphi \in H^\infty + C(\mathbb{T})$.
\end{enumerate}

	\section*{Funding}
The first author is supported by SERB Grant No. MTR/2019/001307, Govt. of India. The second author is supported by the  Department of Science and Technology- INSPIRE Fellowship (Grant No. DST/INSPIRE FELLOWSHIP/2018/IF180107).

\section*{Data Availability}
Data sharing is not applicable to this article as no datasets were generated or
analyzed during the current study.

\section*{Conflict of interest}
The authors declare that there are no conflicts of interests.


\begin{thebibliography}{100}
	\bibitem{AcostaBPBP}
M. D. Acosta\ et al., The Bishop-Phelps-Bollob\'{a}s theorem for operators, J. Funct. Anal. {\bf 254} (2008), no.~11, 2780--2799.

\bibitem{NeeruRamesh}
N. Bala\ and\ R. Golla, Spectral properties of absolutely minimum attaining operators, Banach J. Math. Anal. {\bf 14} (2020), no.~3, 630--649.

\bibitem{BishopPhelps}
E. Bishop\ and\ R. R. Phelps, A proof that every Banach space is subreflexive, Bull. Amer. Math. Soc. {\bf 67} (1961), 97--98.

\bibitem{Brownpariso}	A. Brown\ and\ R. G. Douglas, Partially isometric Toeplitz operators, Proc. Amer. Math. Soc. {\bf 16} (1965), 681--682.

\bibitem{BROWN}
A. Brown\ and\ P. R. Halmos, Algebraic properties of Toeplitz operators, J. Reine Angew. Math. {\bf 213} (1963/64), 89--102.

\bibitem{Carvjalnorm}
X. Carvajal\ and\ W. Neves, Operators that achieve the norm, Integral Equations Operator Theory {\bf 72} (2012), no.~2, 179--195.

\bibitem{CARmin}
X. Carvajal\ and\ W. Neves, Operators that attain their minima, Bull. Braz. Math. Soc. (N.S.) {\bf 45} (2014), no.~2, 293--312.
\bibitem{DOU}
R. G. Douglas, {\it Banach algebra techniques in operator theory}, second edition, Graduate Texts in Mathematics, 179, Springer-Verlag, New York, 1998.
\bibitem{EnfloKoverSmithies}
P. Enflo, J. Kover\ and\ L. Smithies, Denseness for norm attaining operator-valued functions, Linear Algebra Appl. {\bf 338} (2001), 139--144.

\bibitem{GAN}
J. Ganesh, G. Ramesh\ and\ D. Sukumar, A characterization of absolutely minimum attaining operators, J. Math. Anal. Appl. {\bf 468} (2018), no.~1, 567--583.

\bibitem{Hartman} P. Hartman, On completely continuous Hankel matrices, Proc. Amer. Math. Soc. {\bf 9} (1958), 862--866.

\bibitem{KOV}
J. Kover, Compact perturbations and norm attaining operators, Quaest. Math. {\bf 28} (2005), no.~4, 401--408.
\bibitem{Lindenstrauss}
J. Lindenstrauss, On operators which attain their norm, Israel J.
Math. {\bf 1} (1963), 139--148.

\bibitem{MAR}
R. A. Mart\'{\i}nez-Avenda\~{n}o\ and\ P. Rosenthal, {\it An introduction to operators on the Hardy-Hilbert space}, Graduate Texts in Mathematics, 237, Springer, New York, 2007.

\bibitem{PandeyPaulsen}
S. K. Pandey\ and\ V. I. Paulsen, A spectral characterization of $\mathcal{AN}$ operators, J. Aust. Math. Soc. {\bf 102} (2017), no.~3, 369--391.

\bibitem{Peller}
V. V. Peller, {\it Hankel operators and their applications}, Springer Monographs in Mathematics, Springer-Verlag, New York, 2003.

\bibitem{Rameshpara}
G. Ramesh, Absolutely norm attaining paranormal operators, J. Math. Anal. Appl. {\bf 465} (2018), no.~1, 547--556.

\bibitem{RGSSSTOE1}
G. Ramesh\ and\ S. S. Sequeira, Absolutely norm attaining Toeplitz and absolutely minimum attaining Hankel operators, J. Math. Anal. Appl. {\bf 516} (2022), no.~1, Paper No. 126497.

\bibitem{RAMSSS}
G. Ramesh and S. S. Sequeira, On the closure of absolutely norm attaining operators, arXiv:2204.05912.

\bibitem{venkuramesh}
D. Venku Naidu\ and\ G. Ramesh, On absolutely norm attaining operators, Proc. Indian Acad. Sci. Math. Sci. {\bf 129} (2019), no.~4, Paper No. 54, 17 pp.

\bibitem{YOS2}
T. Yoshino, The structure of norm-achieved Toeplitz and Hankel operators, Nihonkai Math. J. {\bf 13} (2002), no.~1, 43--55.
\end{thebibliography}
\end{document}